\documentclass[12pt]{amsart}
\usepackage{verbatim}
\usepackage{amssymb, amsmath}
\usepackage{graphicx}
\usepackage{caption}
\usepackage{subcaption}
 \usepackage{enumitem}
\usepackage{color}
\usepackage{url}

 \usepackage[colorlinks,linkcolor=blue,anchorcolor=blue,citecolor=blue,backref=page]{hyperref}

\usepackage{hyperref}

\begin{document}
\newtheorem{thm}{Theorem}[section]
\newtheorem*{thm*}{Theorem}
\newtheorem{lem}[thm]{Lemma}
\newtheorem{prop}[thm]{Proposition}
\newtheorem{cor}[thm]{Corollary}
\newtheorem{conj}{Conjecture}
\newtheorem{proj}[thm]{Project}
\newtheorem{question}{Question}
\newtheorem{rem}{Remark}[section]

\theoremstyle{definition}
\newtheorem*{defn}{Definition}
\newtheorem*{remark}{Remark}
\newtheorem{exercise}{Exercise}
\newtheorem*{exercise*}{Exercise}

\numberwithin{equation}{section}

\newcommand{\rad}{\operatorname{rad}}

\newcommand{\Z}{{\mathbb Z}} 
\newcommand{\Q}{{\mathbb Q}}
\newcommand{\R}{{\mathbb R}}
\newcommand{\C}{{\mathbb C}}
\newcommand{\N}{{\mathbb N}}
\newcommand{\FF}{{\mathbb F}}
\newcommand{\fq}{\mathbb{F}_q}
\newcommand{\rmk}[1]{\footnote{{\bf Comment:} #1}}

\renewcommand{\mod}{\;\operatorname{mod}}
\newcommand{\ord}{\operatorname{ord}}
\newcommand{\TT}{\mathbb{T}}
\renewcommand{\i}{{\mathrm{i}}}
\renewcommand{\d}{{\mathrm{d}}}
\renewcommand{\^}{\widehat}
\newcommand{\HH}{\mathbb H}
\newcommand{\Vol}{\operatorname{vol}}
\newcommand{\area}{\operatorname{area}}
\newcommand{\tr}{\operatorname{tr}}
\newcommand{\norm}{\mathcal N} 
\newcommand{\intinf}{\int_{-\infty}^\infty}
\newcommand{\ave}[1]{\left\langle#1\right\rangle} 
\newcommand{\Var}{\operatorname{Var}}
\newcommand{\Prob}{\operatorname{Prob}}
\newcommand{\sym}{\operatorname{Sym}}
\newcommand{\disc}{\operatorname{disc}}
\newcommand{\CA}{{\mathcal C}_A}
\newcommand{\cond}{\operatorname{cond}} 
\newcommand{\lcm}{\operatorname{lcm}}
\newcommand{\Kl}{\operatorname{Kl}} 
\newcommand{\leg}[2]{\left( \frac{#1}{#2} \right)}  
\newcommand{\Li}{\operatorname{Li}}

\newcommand{\sumstar}{\sideset \and^{*} \to \sum}

\newcommand{\LL}{\mathcal L} 
\newcommand{\sumf}{\sum^\flat}
\newcommand{\Hgev}{\mathcal H_{2g+2,q}}
\newcommand{\USp}{\operatorname{USp}}
\newcommand{\conv}{*}
\newcommand{\dist} {\operatorname{dist}}
\newcommand{\CF}{c_0} 
\newcommand{\kerp}{\mathcal K}

\newcommand{\Cov}{\operatorname{cov}}
\newcommand{\Sym}{\operatorname{Sym}}

\newcommand{\Ht}{\operatorname{Ht}}

\newcommand{\E}{\operatorname{\mathbb E}} 
\newcommand{\sign}{\operatorname{sign}} 
\newcommand{\meas}{\operatorname{meas}} 
\newcommand{\length}{\operatorname{length}} 

\newcommand{\divid}{d} 

\newcommand{\GL}{\operatorname{GL}}
\newcommand{\SL}{\operatorname{SL}}
\newcommand{\Sp}{\operatorname{Sp}}
\newcommand{\re}{\operatorname{Re}}
\newcommand{\im}{\operatorname{Im}}
\newcommand{\res}{\operatorname{Res}}
 \newcommand{\eigen}{\Lambda} 
\newcommand{\tens}{\mathbf t} 
\newcommand{\diam}{\operatorname{diam}}
\newcommand{\fixme}[1]{\footnote{Fixme: #1}}
 \newcommand{\EWp}{\mathbb E^{\rm WP}_g} 
\newcommand{\orb}{\operatorname{Orb}}
\newcommand{\supp}{\operatorname{Supp}}
\newcommand{\mmfactor }{\textcolor{red}{c_{\rm Mir}}}
\newcommand{\Mg}{\mathcal M_g} 
\newcommand{\MCG}{\operatorname{Mod}} 
\newcommand{\Diff}{\operatorname{Diff}} 
\newcommand{\If}{I_f(L,\tau)}

\newcommand{\GOE}{\operatorname{GOE}}
\newcommand{\GUE}{\operatorname{GUE}}
\newcommand{\GSE}{\operatorname{GSE}}
\newcommand{\rest}{\operatorname{rest}} 
\newcommand{\diag}{\operatorname{DIAG}} %
\newcommand{\antidiag}{\operatorname{ANTI-DIAG}}
\newcommand{\off}{\operatorname{OFF}} %
\newcommand{\finitefieldq}{r} 

\title[Closed geodesics in homology classes]{Closed geodesics in homology classes on random hyperbolic surfaces of large genus}
\author{Ze\'ev Rudnick}
\address{School of Mathematical Sciences, Tel Aviv University, Tel Aviv 69978, Israel} 
\address{Mathematical Research Center, Shandong University,
Jinan 250100, P.R. China.}
\email{rudnick@tauex.tau.ac.il}

\date{\today}
 \begin{abstract}
 We study the distribution of closed geodesics in homology classes on random hyperbolic surfaces of large genus. Viewing the surface as a random point in moduli space equipped with the Weil--Petersson probability measure, we investigate the fluctuations of the weighted counting function of closed geodesics in homology classes modulo $q$. We show that, in the large genus limit, the variance is asymptotic to $X\log X$ for every modulus $q>2$, with an exceptional factor of two when $q=2$. We relate our findings to a conjecture of Hooley, and to work of Friedlander and Goldston, 
 on primes in arithmetic progressions. We also introduce a statistical model based on the random allocation of weighted balls which exhibits analogous behavior.
 \end{abstract}
\maketitle

\tableofcontents

 \section{Introduction}

Prime numbers and primitive closed geodesics are among the most fundamental
discrete objects in mathematics. Although one belongs to arithmetic and the
other to geometry, they have long been known to obey remarkably parallel
asymptotic laws. The Prime Number Theorem has its geometric counterpart in
the Prime Geodesic Theorem. One particularly natural aspect of this analogy concerns the
distribution of primes in arithmetic progressions and the distribution of
closed geodesics among the homology classes of a surface.

Let $M$ be a closed hyperbolic surface of genus $g\ge2$. Every oriented
closed geodesic determines a homology class in $H_1(M,\mathbb Z)$. Reducing
modulo an integer $q>1$ gives a class in the finite group
$H_1(M,\mathbb Z/q\mathbb Z)$, making it natural to ask how primitive closed
geodesics are distributed among these homology classes. Phillips and Sarnak
\cite{PS} established the analogue of the Prime Number Theorem
for arithmetic progressions, proving that for every fixed modulus $q$,
primitive closed geodesics become equidistributed among the homology classes
modulo $q$.

Once equidistribution is known, the next natural question concerns the size
of the fluctuations around this average. This is the geometric analogue of
one of the central problems of analytic number theory. 
The purpose of this paper is to determine the variance of the distribution of closed geodesics among homology classes on random hyperbolic surfaces and to compare it with the corresponding arithmetic problem. 

Unlike the arithmetic setting, hyperbolic geometry admits averaging over natural ensembles of surfaces. In this paper we average over the Weil–Petersson probability measure on the moduli space of hyperbolic surfaces and study the large-genus limit. This additional averaging makes the corresponding problem accessible.

We consider all geodesics to be oriented, unless stated otherwise. Define
the norm of a closed geodesic by
\[
N(\gamma)=e^{\ell(\gamma)},
\]
where $\ell(\gamma)$ denotes its length. A geodesic is primitive if it is
not an iterate of another geodesic. If $\gamma=\gamma_0^k$, where $\gamma_0$
is primitive, and $k\geq 1$, define the geometric von Mangoldt function by
\[
\Lambda(\gamma)=\ell(\gamma_0).
\]
The corresponding Chebyshev function is
\[
\Psi_M(X)=\sum_{N(\gamma)\le X}\Lambda(\gamma),
\]
where the sum is over oriented closed geodesics. The Prime Geodesic Theorem
asserts that
\[
\Psi_M(X)\sim X,\qquad X\to\infty.
\]

For an integer $q>1$ and a homology class
$\alpha\in H_1(M,\mathbb Z)$, define
\[
\Psi_M(X;q,\alpha)
=
\sum_{\substack{N(\gamma)\le X\\
[\gamma]\equiv\alpha \bmod q}}
\Lambda(\gamma),
\]
the weighted counting function of oriented closed geodesics $\gamma$ whose homology
class $[\gamma]\in H_1(M,\Z)$ is congruent to $\alpha$ modulo $q$, after we identify the integral homology group $H_1(M,\Z)$ with the lattice $\Z^{2g}$, see \S~\ref{subsec:homology}. 

By definition, the average of this quantity over all $\alpha \bmod q$ is $$\frac{\Psi_M(X)}{q^{2g}}$$ 
which is asymptotic to $X/q^{2g}$ as $X\to \infty$. 
Phillips and Sarnak \cite{PS} proved that for every fixed modulus $q$,
\[
\Psi_M(X;q,\alpha)\sim \frac{X}{q^{2g}},\quad X\to \infty.
\]

To measure the fluctuations around the mean, define the (unnormalized)
variance
\[
G_M(X,q)
=
\sum_{\alpha\;({\rm mod}\;q)}
\left|
\Psi_M(X;q,\alpha)
-
\frac{\Psi_M(X)}{q^{2g}}
\right|^2.
\]
This is an analogue of a similar quantity for primes in arithmetic progressions, studied by Hooley \cite{HooleyICM}, Friedlander and Goldston \cite{FG}, Keating and Rudnick \cite{KeatingRudnick},  and others \cite{Fiorilli, HKR}, see section~\ref{sec:Hooley et al}. 

Our main result determines the expected value of $G_M(X,q)$ over moduli
space in the large genus limit.

\begin{thm}\label{thm:G}
As $X\to\infty$,
\[
\lim_{g\to\infty}\EWp(G_M(X,q))
\sim
\begin{cases}
X\log X,&q>2,\\
2X\log X,&q=2.
\end{cases}
\]
\end{thm}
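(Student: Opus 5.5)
The plan is to expand the variance as a sum over pairs of geodesics and to use the Mirzakhani--Petri description of short geodesics in the large genus limit. Opening the square in the definition of $G_M(X,q)$ gives
\[
G_M(X,q)=\sum_{\substack{N(\gamma),N(\gamma')\le X\\ [\gamma]\equiv[\gamma']\bmod q}}\Lambda(\gamma)\Lambda(\gamma')-\frac{\Psi_M(X)^2}{q^{2g}} .
\]
With $X$ fixed, only geodesics of length at most $L=\log X$ appear. By Mirzakhani--Petri, $\EWp(\Psi_M(X)^2)$ is bounded uniformly in $g$ (in terms of $X$), so the second term has expectation $O_X(q^{-2g})\to 0$. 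It therefore remains to compute $\lim_{g\to\infty}\EWp$ of the pair sum. I split it according to how $\gamma,\gamma'$ are related. Write $\gamma=\gamma_0^k$ and $\gamma'=\gamma_1^m$ with $\gamma_0,\gamma_1$ primitive. The cases are (a) $\gamma_1=\gamma_0$; (b) $\gamma_1=\gamma_0^{-1}$; (c) $\gamma_1\ne\gamma_0^{\pm1}$.

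In case (c) I would show that the expectation tends to zero. With probability $1-O_L(1/g)$, every closed geodesic of length $\le L$ is simple and non-separating, and any two distinct unoriented ones are disjoint with non-separating union. For such a pair the classes $[\gamma_0],[\gamma_1]$ extend to a symplectic basis of $H_1(M,\Z)$, so no congruence $k[\gamma_0]\equiv m[\gamma_1]\bmod q$ can hold. The contribution of the exceptional configurations (intersecting pairs, non-simple curves, separating multicurves) is controlled by bounding their expected number via Mirzakhani's integration formula and her volume asymptotics. This gives $O_L(1/g)$, and combining it with Cauchy--Schwarz and the uniform second-moment bounds shows that case (c) contributes $o(1)$ as $g\to\infty$.

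In case (a) the condition is $k[\gamma_0]\equiv m[\gamma_0]$. Since $[\gamma_0]$ is primitive in the generic situation, this means $k\equiv m\bmod q$. Case (b) requires $k\equiv -m\bmod q$, and for $k=m$ this holds only when $q=2$. The main term comes from $k=m=1$. In the limit the lengths of oriented primitive geodesics form a Poisson process with intensity $4\sinh^2(t/2)/t\,dt$, which is twice the unoriented Mirzakhani--Petri intensity. So case (a) with $k=m=1$ contributes
\[
\int_0^{L}t^2\,\frac{4\sinh^2(t/2)}{t}\,dt\sim Le^L=X\log X .
\]
For $q=2$, case (b) with $k=m=1$ gives the same amount again, which is the source of the factor $2$. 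All terms with $\max(k,m)\ge2$ force $\ell(\gamma_0)\le L/2$. They contribute at most $\sum_{k,m}\int_0^{L/\max(k,m)}t\,e^{t}\,dt=O(X^{1/2}\log^2X)$, which is negligible. The same holds for $q>2$ in case (b), where $k\ne m$ is forced.

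The main obstacle is case (c): proving rigorously that congruent-mod-$q$ pairs of distinct primitive geodesics contribute nothing in the limit. This needs the $O(1/g)$ bounds for non-simple, intersecting and separating configurations, together with enough uniform integrability to interchange $\lim_{g\to\infty}$ with the expectation. For that interchange I would first try using the moment bounds for the counting function $\#\{\gamma:\ell(\gamma)\le L\}$ that follow from Mirzakhani's integration formula.
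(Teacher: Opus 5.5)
The gap is in case (c). The claim that a disjoint pair of simple closed geodesics with connected complement can never satisfy $k[\gamma_0]\equiv m[\gamma_1]\bmod q$ is false. Normalize $[\gamma_0]=(1,0,0,\ldots,0)$ and $[\gamma_1]=(0,1,0,\ldots,0)$ by the change of coordinates principle. Then the congruence holds exactly when $k\equiv m\equiv 0\bmod q$, because both sides vanish modulo $q$.

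These pairs are not exceptional configurations, so they do not disappear as $g\to\infty$. By the two-curve formula \eqref{eq:double sum SNS}, their limiting contribution is
\[
\sum_{\substack{k,m\ge 1\\ q\mid k,\ q\mid m}} I_1\Big(\frac Lk\Big)\, I_1\Big(\frac Lm\Big),\qquad I_1(A)=\int_0^A\Big(2\sinh\frac t2\Big)^2dt=e^A-2A-e^{-A}.
\]
This is of order $X^{2/q}$; for $q=2$ the single term $k=m=2$ already gives $\sim X$. So case (c) does not contribute $o(1)$, and the statement you single out as the main obstacle is false.

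What saves the theorem is the estimate $\big(\sum_{q\mid k}I_1(L/k)\big)^2\ll(X^{1/q}+\log X)^2\ll X^{2/q}=O(X)=o(X\log X)$, which uses $q\ge 2$. This is the step the paper carries out for SNS pairs. Your proof needs it, via the joint integration formula for pairs of disjoint curves and not just the one-curve Poisson intensity, before case (c) can be discarded. Only the genuinely exceptional pairs are $O_X(1/g)$: non-simple or separating curves, intersecting pairs, and disjoint pairs with disconnected complement.

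Otherwise your route matches the paper's. You open the square the same way, remove $\Psi_M(X)^2/q^{2g}$ the same way, and get the main terms from $k=m=1$ in the diagonal and, for $q=2$, the anti-diagonal. There are three smaller points.

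\emph{The bound for $\max(k,m)\ge 2$.} Your bound $\sum_{k,m}\int_0^{L/\max(k,m)}te^t\,dt$ is infinite as written. For $n=\max(k,m)>L$ there are $2n-1$ pairs, each with integral $\asymp (L/n)^2$. Keep the true density instead: $t\,(2\sinh(t/2))^2\ll t^3$ for $t\le 1$ gives $\ll (L/n)^4$ per pair and $O(L^2)$ in total from $n>L$.

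\emph{Case (b) with $q>2$.} What is forced is $(k,m)\neq(1,1)$, not $k\neq m$; for instance $k=m=q$ is allowed. This is all the bound needs.

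\emph{Cauchy--Schwarz.} This step is unnecessary. For a fixed pair of primitive geodesics the total weight over all exponents is $\ell_0\lfloor L/\ell_0\rfloor\cdot \ell_1\lfloor L/\ell_1\rfloor\le L^2$. So it suffices that the expected number of exceptional pairs is $O_X(1/g)$, which is what the paper uses.
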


The factor $X$ is consistent with square-root fluctuations in the
distribution of closed geodesics among the homology classes, as in the arithmetic setting. At first sight the logarithmic factor $X\log X$ contrasts  
with the behavior predicted for primes in arithmetic progressions, see Section~\ref{sec:Hooley et al}. 
However, the order of limits in Theorem~\ref{thm:G}  places the geometric problem in a sparse regime: for fixed $X$ and $q$, the number $q^{2g}$ 
 of homology classes tends to infinity with $g$, while the number of geodesics being distributed remains of order depending only on $X$. As we explain in Section~\ref{sec:Hooley et al}, the appropriate arithmetic comparison is therefore with the “trivial” range $X<Q$, where the corresponding variance is also $X\log X$.
 

In Section~\ref{sec:random} we suggest a statistical model, of random allocation of weighted balls, to compare with our results.


\section{Background}

\subsection{Homology} \label{subsec:homology}
Let $M$ be a closed hyperbolic surface of genus $g\geq 2$. Each nontrivial  free homotopy class contains a unique closed geodesic, allowing us, once we have chosen a base point $x_0\in M$, to identify the set of closed geodesics with the nontrivial conjugacy classes in the fundamental group $\pi_1(M,x_0)$.

We choose a canonical homology basis, which is a system of simple closed curves $\alpha_1,\ldots, \alpha_g$, $\beta_1,\ldots, \beta_g$, each $\alpha_i$ intersects $\beta_i$ exactly at one point, and there are no other intersections (Figure~\ref{fig:canonical homology basis}).   Choosing a common base point $x_0\in M$ and connecting it to each of the curves $\alpha_1,\ldots, \beta_g$ by a path $\eta_j$ gives a set of loops $A_1,\ldots, A_g, B_1,\ldots, B_g$ which generate the fundamental group $\pi_1(M,x_0)$, with the sole relation being that the product of the commutators $[A_i,B_i]=A_iB_iA_i^{-1}B_i^{-1}$ is the identity:
\[
[A_1,B_1]\cdot \ldots \cdot [A_g,B_g]=1.
\]
We then get a map $e:\pi_1(M,x_0)\to \Z^{2g}$ mapping each word in the generators to the sum of the exponents with which each generator appears. For instance, for $g=2$ if $w=A_1B_1^2A_2B_1^3$ then $e(w)=(1,5,1,0)\in \Z^4$. This gives an isomorphism $H_1(M,\Z)\simeq \pi_1(M,x_0)^{ab}\to \Z^{2g}$. 
Thus the coordinates of  $e(\gamma)$  may be viewed as the generalized winding numbers of $\gamma$ around the handles of the surface.

Equivalently, let $\omega_1,\ldots, \omega_{2g}$ be the basis of harmonic one-forms dual to our canonical homology basis: $\int_{\alpha_j}\omega_j=\int_{\beta_j} \omega_{j+g}=1$, $j=1,\ldots, g$, and all other period integrals vanish. Then  our  map $e$ coincides with the period map
\[
[\gamma]\in H_1(M,\Z)\mapsto \left(\int_{\gamma}\omega_1, \ldots, \int_{\gamma}\omega_{2g} \right).
\]
 
 Given an integer $q\geq 1$, the mod $q$ homology  can be identified with homology with coefficients in $\Z/q\Z$
\[
H_1(M,\Z/q\Z) \simeq H_1(M,\Z)/q H_1(M,\Z) \simeq \left( \Z/q\Z \right)^{2g}.
\]


The intersection form is a nondegenerate symplectic form on $H_1(M,\Z)$. 
The matrix of the intersection form in the canonical homology basis is 
\[
 \begin{pmatrix} 0 & I_g\\ -I_g&0\end{pmatrix}.
\]
The mapping class group preserves the intersection form, and thus is mapped to the integral symplectic group $\Sp(2g,\Z)$. This map is in fact surjective, see e.g. \cite[Theorem 6.4]{FM}. 

A particularly important class of closed geodesics for our purposes is that of simple (embedded, or equivalently, having no self-intersections) non-separating geodesics, namely those whose complement in the surface is connected. Such a geodesic represents a primitive homology class. Moreover, the mapping class group acts transitively on primitive homology classes (see e.g.  \cite[\S 1.3.1]{FM}).  

\begin{figure}[ht]
\begin{center}
\includegraphics[height=50mm, width=120mm]{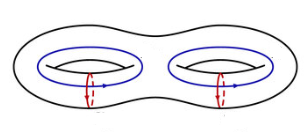}
\caption{A canonical homology basis for a genus $2$ surface.}
\label{fig:canonical homology basis}
\end{center}
\end{figure}

\subsection{Mirzakhani's integration formulas}


Let $S_g$ be a topological compact surface of genus $g$, and $\Mg$ the moduli space of hyperbolic metrics on $S_g$, equipped with the Weil-Petersson probability measure. 

Given a reasonable function $F$ on $(0,\infty)$, we want to compute the expected value over the moduli space $\Mg$ of 
\[
F_{SNS}(M):=\sum_{\gamma\; SNS} F(\ell_M(\gamma))
\]  
where $M\in \Mg$, the sum is over all simple closed (unoriented) geodesics $\gamma$, which are non-separating, that is, the complement  $S_g\backslash \gamma\simeq S_{g-1,2}$ is a connected surface, of genus $g-1$ with two boundary components, and $\ell_M(\gamma)$ is the length of the geodesic. Combining Mirzakhani's integration formula (see \cite[Theorem 2.2]{MP}) with asymptotics of volume ratios \cite[Proposition 3.1]{MP} and  \cite[Lemma 22]{NieWuXue} gives
\begin{equation}\label{eq:sum SNS}
\lim_{g\to \infty} \EWp(F_{SNS}) = \frac 12 \int_0^\infty F(\ell) \left( \frac{\sinh (\ell/2)}{\ell/2}\right)^2   \,\ell \,d\ell .
\end{equation}

The second situation is when we form the double sum
\[
F_{SNS,2}(M) = \sum_{(\gamma,\gamma') \; SNS} F_1\left( \ell_M(\gamma)\right)  F_2\left( \ell_M(\gamma') \right)
\]
where the sum is over pairs of simple non-separating geodesics, which are non-homotopic and disjoint: $\gamma\cap \gamma'=\emptyset$, and such that the complement $S_g\backslash \gamma\cup \gamma'\simeq S_{g-2,4}$ is still connected, so of genus $g-2$ with $4$ boundary components. Then 
\begin{multline}\label{eq:double sum SNS}
\lim_{g\to \infty} \EWp(F_{SNS,2}) =
\\
 \frac 1{2} \int_0^\infty   F_1(\ell)  \left( \frac{\sinh (\ell/2)}{\ell/2}\right)^2  
   \,\ell \,d\ell  \cdot  \frac 1{2} \int_0^\infty   F_2(\ell)  \left( \frac{\sinh (\ell/2)}{\ell/2}\right)^2  
   \,\ell \,d\ell .
\end{multline}

 \section{The expected value of $G_M(X,q)$} 
\subsection{Preliminaries}
Recall
\[
G_M(X,q) =   \sum_{\alpha \bmod q} \left| \Psi_M(X;q,\alpha) - \frac{\Psi_M(X)}{q^{2g}}  \right|^2 .
\]

\begin{lem}\label{lem:simple reduction}
\[
G_M(X,q) =   \sum_{\substack{N(\gamma_1),N(\gamma_2)\leq X\\ [\gamma_1]=[\gamma_2] \bmod q}} \Lambda(\gamma_1)\Lambda(\gamma_2) -    \frac{\Psi_M(X)^2}{q^{2g}}  
\]
\end{lem}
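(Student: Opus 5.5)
The identity is purely algebraic: expand the square and then use two counting facts. Write $A_\alpha=\Psi_M(X;q,\alpha)$ and $\mu=\Psi_M(X)/q^{2g}$. Since $\Psi_M(X;q,\alpha)$ is real, $|A_\alpha-\mu|^2=A_\alpha^2-2\mu A_\alpha+\mu^2$. Summing over all $q^{2g}$ classes $\alpha\in(\Z/q\Z)^{2g}$ gives
\[
G_M(X,q)=\sum_{\alpha \bmod q}A_\alpha^2-2\mu\sum_{\alpha\bmod q}A_\alpha+q^{2g}\mu^2 .
\]

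\textbf{The linear term.} Each oriented closed geodesic $\gamma$ with $N(\gamma)\le X$ has exactly one reduction $[\gamma]\bmod q$. Hence the classes partition the sum, and $\sum_\alpha A_\alpha=\Psi_M(X)$. The last two terms then combine to
\[
-2\frac{\Psi_M(X)^2}{q^{2g}}+\frac{\Psi_M(X)^2}{q^{2g}}=-\frac{\Psi_M(X)^2}{q^{2g}} .
\]

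\textbf{The quadratic term.} Write $A_\alpha^2$ as a double sum over pairs $(\gamma_1,\gamma_2)$ with both $[\gamma_i]\equiv\alpha\bmod q$ and both $N(\gamma_i)\le X$, each pair weighted by $\Lambda(\gamma_1)\Lambda(\gamma_2)$. Summing over $\alpha$ collects every pair with $[\gamma_1]\equiv[\gamma_2]\bmod q$ exactly once, namely for $\alpha=[\gamma_1]\bmod q$. This gives the stated double sum.

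All sums involved are finite, because only finitely many closed geodesics have length at most $\log X$. So the rearrangements need no justification, and no step here is a real obstacle. The only point needing care is that $\alpha$ ranges over the full group $H_1(M,\Z/q\Z)\simeq(\Z/q\Z)^{2g}$ of size $q^{2g}$, under the identification of \S\ref{subsec:homology}. This is what makes $q^{2g}\mu^2=\Psi_M(X)^2/q^{2g}$.
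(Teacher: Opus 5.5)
Your proof is correct and follows essentially the same route as the paper. The paper expands the square via the mean-of-squares-minus-square-of-mean identity, which implicitly uses $\sum_{\alpha} \Psi_M(X;q,\alpha)=\Psi_M(X)$; you make this step explicit. Both arguments then rewrite $\sum_\alpha \Psi_M(X;q,\alpha)^2$ as the double sum over pairs with $[\gamma_1]\equiv[\gamma_2] \bmod q$.
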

\begin{proof}
This is a straightforward consequence of the definitions. We have
\[
\frac 1{q^{2g}} G_M(X,q) =  \frac 1{q^{2g}} \sum_{\alpha \bmod q} \Psi_M(X;q,\alpha)^2
-   \left( \frac{\Psi_M(X)}{q^{2g}}\right)^2 
\]
and
\[
\begin{split}
 \sum_{\alpha \bmod q} \Psi_M(X;q,\alpha)^2 &=   \sum_{\alpha \bmod q} \sum_{\substack{ N(\gamma_1)\leq X \\  [\gamma_1]=\alpha \bmod q}}  \Lambda(\gamma_1)\sum_{\substack{N(\gamma_2)\leq X\\ [\gamma_2]=\alpha \bmod q}} \Lambda(\gamma_2)
\\
&=\sum_{\substack{N(\gamma_1),N(\gamma_2)\leq X\\ [\gamma_1]=[\gamma_2] \bmod q}} \Lambda(\gamma_1)\Lambda(\gamma_2) 
\end{split}
\]
which proves our claim. 
\end{proof}

We start with Lemma~\ref{lem:simple reduction} and want to evaluate the limiting expected value of 
\[
J_M(X,q)=\sum_{\substack{N(\gamma_1),N(\gamma_2)\leq X\\ [\gamma_1]=[\gamma_2] \bmod q}} \Lambda(\gamma_1)\Lambda(\gamma_2) 
\]
where the sum is over all oriented geodesics, not necessarily primitive.

We expand as $\gamma_i^{k_i}$ with $\gamma_i$ primitive, $k_i\geq 1$, and $k_1[\gamma_1]=k_2[\gamma_2] \bmod q$. 
Following the computations in \cite{MP, RudGOE, MarklofMonk}, up to an error of $O_{X,q}(1/g)$, it suffices to consider  the following four kinds of pairs of primitive geodesics:   
\begin{enumerate}
\item Diagonal: $\gamma_1=\gamma_2$ with $\gamma_1$  simple and non-separating; 
\item Anti-diagonal: $\gamma_2=\gamma_1^{-1}$   with $\gamma_1$  simple and non-separating; 
\item SNS pair: $\gamma_1\cap \gamma_2=\emptyset$  and the complement of $\gamma_1\cup \gamma_2$ is connected;
\item all other pairs.
\end{enumerate}

 \subsection{Diagonal pairs}\label{subsec:diagonal pairs}
 We expand over pairs 
 \[
 \gamma_1=\gamma^{k_1}, \quad \gamma_2=\gamma^{k_2}
 \] 
 with $\gamma$ simple, non-separating and $k_1,k_2\geq 1$. The homology condition is 
 \[
 k_1[\gamma]=k_2[\gamma] \bmod q.
 \]
  For a simple non-separating geodesic, the homology class is a primitive vector in $H_1(M,\Z)\simeq \Z^{2g}$ which may be assumed to be $[\gamma]=(1,0,0,\ldots,0)$ (see e.g.  \cite[\S 1.3.1]{FM}) and so we require 
 $$k_1=k_2 \mod q.$$
 
 We also want $N(\gamma^{k_i}) \leq X$, that is,  the lengths   satisfy $k_i\ell_\gamma\leq \log X$. So we require 
 $$
 \ell_\gamma\leq \frac{\log X}{\max(k_1,k_2)}\;.
 $$  
 
 We omit the requirement that $\gamma$ is oriented, at a cost of multiplying by a factor of $2$. Then we can use \eqref{eq:sum SNS} 
 \[
 \begin{split}
\diag &= 2\sum_{\substack{k_1,k_2\geq 1\\k_1=k_2 \bmod q} } \lim_{g\to \infty} \EWp\left\{ \sum_{\substack{\gamma\; SNS\\ {\rm unoriented}   
 \\  \ell_\gamma\leq \log X/\max(k_1,k_2)}} \ell_\gamma^2 \right\} 
\\
& =  2\sum_{\substack{k_1,k_2\geq 1\\k_1=k_2 \bmod q} }
\frac 12 \int_{0}^{\log X/\max(k_1,k_2)} \ell^2 \left( \frac{\sinh(\ell/2)}{\ell/2} \right)^2 \ell d\ell 
\\
&=  \sum_{\substack{k_1,k_2\geq 1\\k_1=k_2 \bmod q} } I(\frac{\log X}{\max(k_1,k_2)})    \;, 
 \end{split}
 \]
 where
 \[
 I(A):=\int_0^A \left(2\sinh \frac \ell 2 \right)^2 \,\ell d\ell . 
 \]
We have
\begin{equation}\label{eq: exact IA}
I(A)= 2A\sinh A-2\cosh A-A^2+ 2. 
\end{equation}
For $0<A<1$, we use $2\sinh \frac \ell 2\leq 2\ell$ so that
\begin{equation}\label{eq:A less 1}
I(A)\leq 4\int_0^A \ell^3 d\ell = A^4.
\end{equation}
 For $A>1$, we will use $(2\sinh\frac \ell2)^2 \leq e^\ell$ so that 
 \begin{equation}\label{eq:A gtr 1}
 I(A)\leq \int_0^A e^\ell \ell d\ell <Ae^A. 
 \end{equation}

For $(k_1,k_2)=(1,1)$, we evaluate the  contribution by using \eqref{eq: exact IA} as
\[
I(\log X)= X\log X-X+O\left( (\log X)^2 \right) \;.
\]

For all other pairs, we bound the contribution as follows (we don't use the congruence modulo $q$): 

For $ \max(k_1,k_2)>\log X$, we use \eqref{eq:A less 1}
\[
 I(\frac{\log X}{\max(k_1,k_2)})   \ll \frac{(\log X)^4}{(\max(k_1,k_2))^4}
\]
so that 
\[
\begin{split}
\sum_{\max(k_1,k_2)>\log X} &  I(\frac{\log X}{\max(k_1,k_2)})  \ll \sum_{\max(k_1,k_2)>\log X}\frac{(\log X)^4}{(\max(k_1,k_2))^4}
\\
&\ll 
(\log X)^4 \left( \sum_{k>\log X} \frac 1{k^4} + 2\sum_{k_2>\log X} \frac 1{k_2^4} \sum_{1\leq k_1< k_2} 1 \right)
\\
&\ll (\log X)^2. 
\end{split}
 \]

For $ 1<\max(k_1,k_2)\leq \log X$ we use \eqref{eq:A gtr 1} to bound 
\[
\begin{split}
&\sum_{1<\max(k_1,k_2)\leq \log X}     I(\frac{\log X}{\max(k_1,k_2)}) 
 \leq \sum_{1<\max(k_1,k_2)\leq \log X} \frac{X^{1/\max(k_1,k_2)}\log X }{\max(k_1,k_2)}
\\
&\ll \sum_{2\leq k\leq \log X} \frac{X^{1/k}\log X}{k} + 2\sum_{2\leq k_2\leq \log X} \frac{X^{1/k_2}\log X}{k_2} \sum_{1\leq k_1<k_2} 1
\\
&\ll X^{1/2} \log X.
\end{split}
 \]

Altogether, we found that
\[
\diag\sim X\log X.
\]

\subsection{The anti-diagonal}
Now the condition is  $\gamma_1=\gamma^{k_1}$, $\gamma_2=\gamma^{-k_2}$ with $\gamma$ simple, non-separating, $k_1,k_2\geq 1$ and 
\[
k_1[\gamma] = -k_2[\gamma] \bmod q.
\]
As before, we may assume $[\gamma]=(1,0,\ldots, 0)$ so that we need
\[
k_1=-k_2 \mod q.
\]

If $q=2$ then we recover the same condition as the diagonal, and in that case
\[
\antidiag \sim X\log X, \quad q=2. 
\]
For $q>2$, we no longer have the pair $(k_1,k_2)=(1,1)$ and then the other pairs are bounded as in the case of the diagonal, to give
\[
\antidiag \ll X^{1/2} \log X, \quad q>2. 
\]

\subsection{SNS pairs} 
Now we sum over  pairs $(\gamma_1^{k_1},\gamma_2^{k_2})$ where $\gamma_1,\gamma_2$ are simple non-intersecting so that the complement of $\gamma_1\cup \gamma_2$ is connected, and $k_1,k_2\geq 1$, and $k_1[\gamma_1]=k_2[\gamma_2] \bmod q$. 
By the change of coordinates principle (see \cite[\S~1.3.3 Example 3]{FM}), we may assume that
\[
[\gamma_1]=(1,0,0,\ldots,0), \quad [\gamma_2]=(0,1,0,\ldots ,0)
\]
so that the condition $k_1[\gamma_1]=k_2[\gamma_2] \bmod q$ is equivalent to 
\[
k_1,k_2=0\bmod q.
\]
In  particular $k_1,k_2\geq q \geq 2$. 

The contribution of the  SNS pairs is therefore bounded by 
\[
\sum_{\substack{k_1,k_2\geq q\\k_1=k_2 =0\bmod q}}  J(k_1,k_2)
\]
where, by \eqref{eq:double sum SNS}, 
\[
\begin{split}
  J(k_1,k_2) & = \lim_{g\to \infty} \EWp\left\{ \sum_{\substack{(\gamma_1,\gamma_2) \, SNS\\  k_1\ell_1,k_2\ell_2\leq \log X}} 
  \ell_1 \cdot \ell_2 \right\}
  \\
  &  = 2^2 \frac 1{2  } \int_0^{\log X/k_1} \ell_1 \left( \frac{\sinh(\ell_1/2)}{\ell_1/2} \right)^2 \,\ell_1 \,d\ell_1
  \\
  & \cdot 
\frac 1{2  }    \int_0^{\log X/k_2}\ell_2  \left( \frac{\sinh(\ell_2/2)}{\ell_2/2} \right)^2 \,\ell_2 \,d\ell_2
   \\
  & = I_1(\frac{\log X}{k_1}) \cdot I_1(\frac{\log X}{k_2})
\end{split}
\]
(the factor $2^2$ accounts for passage from oriented to unoriented $\gamma_1,\gamma_2$), with  
\[
I_1(A) = \int_0^A  \left( 2\sinh \frac \ell 2 \right)^2 \,d\ell = e^A-2A- e^{-A} , \quad A>0.
\]
For $A>1$ we bound this by 
\[
I_1(A)<e^A, \quad A>1.
\]
 For $0<A<1$ we use $2\sinh \frac \ell 2\leq 2\ell$ if $0\leq \ell \leq 1$ to bound
 \[
 I_1(A)\leq 4\int_0^A \ell^2 \,d\ell \ll A^3.  
 \]
We obtain that the contribution of SNS pairs is bounded by 
\[
\left( \sum_{q\leq k \leq \log X} X^{1/k}  + \sum_{k>\log X}  \frac{(\log X)^3}{k^3} \right)^2\ll \left(X^{1/q} + \log X \right)^2\ll X^{2/q}
\]
which is $O(X)$ since $q\geq 2$, hence negligible relative to the diagonal term of $X\log X$. 

 \subsection{All other pairs}\label{subsec:all other pairs}

It remains to consider all remaining pairs of primitive geodesics.
These consist of pairs for which at least one geodesic is not simple
non-separating, or both are simple non-separating but intersect, or
are disjoint with disconnected complement. Since all summands are
non-negative, we may discard the congruence condition
$k_1[\gamma_1]\equiv k_2[\gamma_2]\pmod q$, and are left with the
corresponding sum over all such pairs.

Arguing as in   \cite[\S4.2]{RudSI}, we bound this sum by
$(\log X)^2\cdot N_2'(0,\log X)$, 
where $N_2'(0,\log X)$ denotes the number of pairs of primitive geodesics  $(\gamma_1,\gamma_2)$  with length at most $\log X$, 
 such that either one of the geodesics is not simple non-separating, or the two intersect, or they are disjoint with disconnected complement. 
 Each geodesic contributes at most $\Lambda(\gamma)\leq \log X$, so each pair contributes at most $(\log X)^2$. 
 Mirzakhani and Petri \cite[proof of Proposition 4.2 and Proposition 4.5]{MP} 
show that 
\[
\EWp\left(N_2'\left(0,\log X \right)\right)=O_X(1/g),
\]
and therefore the contribution of all remaining pairs is
$O_X(1/g)$, which vanishes in the large genus limit.

\subsection{The mean of $G_M(X,q)$}

We deduce that as $X\to \infty$,
\[
\lim_{g\to \infty} \EWp \left(J_M(X,q) \right)  \sim \begin{cases} X\log X, &q>2,\\ 2 X\log X, & q=2. \end{cases} 
\]
Since 
\[
G_M(X,q) =  J_M(X,q) -  \frac {\Psi_M(X)^2 }{q^{2g}} 
\]
taking the expected value $\lim_{g\to \infty}\EWp$, 
we find that  
\[
\lim_{g\to \infty} \EWp \left(G_M(X,q)  \right) = 
\lim_{g\to \infty} \EWp \left(J_M(X,q) \right)  - \lim_{g\to \infty} \frac  {\EWp \left( \Psi_M(X)^2 \right) }{q^{2g}} .
\]
The term $\lim_{g\to \infty} \EWp \left( \Psi_M(X)^2 \right) $ depends only on $X$ (see Lemma~\ref{lem:mean square of Psi} below) and dividing by $q^{2g}$ kills it in the limit $g\to \infty$, and so we find that as $X\to \infty$  
\[
\lim_{g\to \infty} \EWp \left(G_M(X,q)  \right)=\lim_{g\to \infty} \EWp \left(J_M(X,q) \right) \sim \begin{cases} X\log X, & q>2,\\ 2X\log X, & q=2. \end{cases}
\]

\section{The mean square of $\Psi_M(X)$}

We need to know that $\lim_{g\to \infty}\EWp(|\Psi_M(X)|^2)<\infty$. In fact we can compute it. 
 The result is 
\begin{lem}\label{lem:mean square of Psi}
As $X\to \infty$, 
\[
\lim_{g\to \infty}\EWp(|\Psi_M(X)|^2)    \sim X^2.
\]
\end{lem}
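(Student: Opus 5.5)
We will expand $\Psi_M(X)^2$ as a double sum over pairs $(\gamma_1^{k_1},\gamma_2^{k_2})$ of powers of primitive oriented geodesics,
\[
\Psi_M(X)^2=\sum_{\substack{k_1\ell(\gamma_1)\le \log X\\ k_2\ell(\gamma_2)\le\log X}}\ell(\gamma_1)\ell(\gamma_2),
\]
and then run the same four-way decomposition used for $J_M(X,q)$, now without the congruence condition. The four kinds of pairs are the diagonal $\gamma_2=\gamma_1$, the anti-diagonal $\gamma_2=\gamma_1^{-1}$, the SNS pairs, and all other pairs. By the argument of \S\ref{subsec:all other pairs}, quoting Mirzakhani--Petri, the "all other pairs" contribute $(\log X)^2\,\EWp(N_2'(0,\log X))=O_X(1/g)$. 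That term vanishes in the limit, and in particular $\lim_{g\to\infty}\EWp(\Psi_M(X)^2)$ exists and is finite, which is all that \S3 actually needs.

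\textbf{Diagonal and anti-diagonal.} These are handled exactly as in \S\ref{subsec:diagonal pairs}, except that the sum now runs over all $k_1,k_2\ge1$. Each of the two contributes
\[
\sum_{k_1,k_2\ge1} I\Big(\frac{\log X}{\max(k_1,k_2)}\Big)=X\log X+O(X).
\]
The $(1,1)$ term gives $X\log X-X+O((\log X)^2)$, and the remaining pairs were already bounded there by $O(X^{1/2}\log X)$ without using the congruence. So the diagonal-type terms total $O(X\log X)=o(X^2)$.

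\textbf{SNS pairs.} This is the main term. By \eqref{eq:double sum SNS}, with the factor $2^2$ for orientations as in the SNS computation above, the contribution is
\[
\sum_{k_1,k_2\ge1} I_1\Big(\frac{\log X}{k_1}\Big)\, I_1\Big(\frac{\log X}{k_2}\Big)=\Big(\sum_{k\ge1} I_1\Big(\frac{\log X}{k}\Big)\Big)^2 .
\]
The $k=1$ term is $I_1(\log X)=X-2\log X-X^{-1}$. For $2\le k\le\log X$ we use $I_1(A)<e^A$, giving a total of $O(X^{1/2}\log X)$. For $k>\log X$ we use $I_1(A)\ll A^3$, giving $O(\log X)$. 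Hence the inner sum is $X+O(X^{1/2}\log X)$, and its square is $X^2+O(X^{3/2}\log X)\sim X^2$.

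Combining the three pieces gives $\lim_{g\to\infty}\EWp(\Psi_M(X)^2)=X^2+O(X^{3/2}\log X)\sim X^2$.

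\textbf{Main obstacle.} The delicate step is justifying that the remainder from non-SNS pairs is genuinely $O_X(1/g)$ even without a congruence restriction. This needs the Mirzakhani--Petri bound on pairs that intersect or have disconnected complement, together with the analogous bound for the expected number of single geodesics that are not simple non-separating. For fixed $X$ this is exactly what the cited propositions provide, so I expect it to transfer verbatim.
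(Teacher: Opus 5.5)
Your proposal is correct and follows essentially the same route as the paper. The paper packages the sum over powers into $F(\ell)=\ell\lfloor \log X/\ell\rfloor$ summed over unoriented geodesics, so its single diagonal term is your diagonal plus anti-diagonal, which is $o(X^2)$. It discards the remaining pairs by the Mirzakhani--Petri bound as in \S\ref{subsec:all other pairs}, and gets the main term from \eqref{eq:double sum SNS} as $\big(\sum_{n\ge 1} I_1(\log X/n)\big)^2\sim X^2$, exactly as you do.
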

\begin{proof}
We write
\[
\Psi_M(X) =2\sum_\gamma F(\ell_\gamma) ,
\]
where the sum is over unoriented geodesics, and 
\[
F(\ell) = \ell\,\left\lfloor \frac {\log X}{\ell} \right\rfloor = \ell \sum_{n\geq 1} \mathbf 1_{J(n)}(\ell) 
\]
with $J(n) = (0,\frac{\log X}{n}]$.  
Then
\[
\EWp(|\Psi_M(X)|^2)  = 4\EWp\left(\sum_{(\gamma_1,\gamma_2)} F(\ell_1) F(\ell_2) \right).
\]

Arguing as in \S~\ref{subsec:all other pairs},  the only geodesics which contribute something not bounded by $O_X(1/g)$ are  diagonal orbits $\gamma_1=\gamma_2$ with $\gamma_1$ simple non-separating, and SNS pairs $(\gamma_1,\gamma_2)$ with $\gamma_1, \gamma_2$ disjoint simple curves so that the complement of $\gamma_1\cup \gamma_2$ is connected. 

The contribution of the diagonal pairs is evaluated similarly to that in \S~\ref{subsec:diagonal pairs}  and is asymptotic to $X\log X$ for $X\gg 1$. 

For the SNS off-diagonal pairs, we use Mirzakhani's integration formula \eqref{eq:double sum SNS} to obtain 
\[
\lim_{g\to \infty} \EWp(  \off_{SNS})  =   4 \left( \frac 12\int_{0}^{\infty}   F(\ell) \left( \frac{\sinh(\ell/2)}{\ell/2} \right)^2 \,\ell \,d\ell \right)^2.	
\]
We have
\[
\begin{split}
 \int_{0}^{\infty}   F(\ell) \left( \frac{\sinh(\ell/2)}{\ell/2} \right)^2 \,\ell \,d\ell &= 
\int_0^{\log X} \ell \sum_{n\geq 1}  \mathbf 1_{J(n)}(\ell) \left( \frac{\sinh(\ell/2)}{\ell/2} \right)^2 \,\ell \,d\ell 
\\
& = \sum_{n\geq 1} \int_0^{\frac{\log X}{n} } \left(2\sinh \frac \ell 2 \right)^2 d\ell .
\end{split}
\]

We have
\[
 \int_0^{A } \left(2\sinh \frac \ell 2 \right)^2 d\ell =e^A-2A-e^{-A}. 
\]
For $n=1$, we therefore obtain
\[
\int_0^{ \log X  } \left(2\sinh \frac \ell 2 \right)^2 d\ell \sim X.
\]
For $2\leq n\leq \log X$, we bound the integral by $e^A$ to obtain
\[
\sum_{2\leq n\leq \log X} \int_0^{\frac{\log X}{n} } \left(2\sinh \frac \ell 2 \right)^2 d\ell 
<\sum_{2\leq n\leq \log X}  X^{\frac 1n} \ll X^{1/2}.
\]
For $n>\log X$, we use $2\sinh \frac \ell 2\leq \ell$ if $\ell\in (0,1]$ so that the integral is bounded by 
\[
\int_0^{\frac{\log X}{n} } \left(2\sinh \frac \ell 2 \right)^2 d\ell \leq  \int_0^{\frac{\log X}{n} } \ell^2 \,d\ell = \frac{(\log X)^3}{n^3}
\]
and the sum of the integrals is bounded by 
\[
\sum_{n>\log X} \frac{(\log X)^3}{n^3}\ll \log X. 
\]

Therefore we find
\[
 \int_{0}^{\infty}   F(\ell) \left( \frac{\sinh(\ell/2)}{\ell/2} \right)^2 \,\ell \,d\ell \sim X ,
\]
and thus
\[
\lim_{g\to \infty} \EWp \left( |\Psi_M(X) |^2\right)\sim X^2
\]
as $X\to \infty$. 
\end{proof}

\begin{remark} The computation actually shows that 
\[
\lim_{g\to \infty} \EWp\left( \Psi_M(X) \right) \sim X. 
\]
\end{remark}

\section{Comparison with the theory of primes in arithmetic progressions}\label{sec:Hooley et al}
We discuss the distribution of prime numbers in arithmetic progressions.
We set
\begin{equation*}
  \psi(X;Q,A) := \sum_{\substack{n\leq X\\ n=A\bmod Q}} \Lambda(n) 
\end{equation*}
where the von Mangoldt function is $\Lambda(n) = \log p$ if $n=p^k>1$ is a power of a prime $p$, and zero otherwise. 
The Prime Number Theorem for arithmetic progressions states that for a fixed modulus
$Q$,
\begin{equation*}
   \psi(X;Q,A)\sim \frac{X}{\phi(Q)},\quad \mbox{ as }X\to \infty 
   \;.
\end{equation*}

Define
\begin{equation*}
  G(X,Q)=\sum_{\substack{A\bmod Q\\ \gcd(A,Q)=1}} \left| \psi(X;Q,A)-\frac X{\phi(Q)}
  \right|^2 \;.
\end{equation*}
In the ``trivial'' regime $X<Q$, Friedlander and Goldston \cite{FG}  showed that 
\begin{equation}\label{eq:FG trivial}
G(X,Q)\sim X\log X, \quad Q^\varepsilon<X<Q. 
\end{equation}
Hooley \cite{HooleyICM} conjectured that under some (unspecified)
conditions,
\begin{equation}\label{Hooley conj}
  G(X,Q) \sim X\log Q \;.
\end{equation}
Friedlander and Goldston \cite{FG}   argued  that \eqref{Hooley conj}  holds in the range
$X^{1/2+\epsilon}<Q<X$,  assuming a
Hardy-Littlewood conjecture with small remainders.  The asymptotic  \eqref{Hooley conj} is now believed to hold for $X^\varepsilon<Q<X^{1-\varepsilon}$  (see \cite{Fiorilli} for a finer version).  

In comparing Theorem~\ref{thm:G} with the  arithmetic results above, it is important to take account of the order of limits. For fixed $X$ and $q$, we first let $g\to \infty$. The number of homology classes modulo $q$ is $q^{2g}$, and hence tends to infinity, whereas the number of closed geodesics of norm at most $X$ which contribute in the large-genus limit remains bounded in terms of $X$. Thus almost all homology classes contain no such geodesic. In this sense the large-genus limit places us in a sparse, or “trivial”, regime, analogous to the arithmetic range $X<Q$. From this perspective, the asymptotic $X\log X$ in Theorem~\ref{thm:G} is consistent with the arithmetic result \eqref{eq:FG trivial} of Friedlander and Goldston.

A different regime arises in recent work of Pirani \cite{Pirani homology}, who replaces reduction modulo $q$ by reduction modulo a sublattice $\Lambda\subset H_1(M,\Z)$  of index $q$. There are then only $q$  homology classes, rather than $q^{2g}$, and when $q=o(X/\log X)$ the number of primitive geodesics in each class tends to infinity; this therefore provides a geometric analogue of the nontrivial range $Q<X$ in the arithmetic problem.

The distinction between these two regimes can be viewed as a question of how a collection of geodesics is distributed among a varying number of homology classes. In the next section we introduce a simple random allocation model which treats these two regimes in a common framework. 

 \section{A random model}\label{sec:random}

We introduce a simple statistical model, based on the random allocation of weighted balls, which exhibits behavior analogous to that of the geodesic problem.

We are given an infinite set of ``balls", corresponding to the unoriented closed geodesics $\{\gamma_j\}$, 
  with weights (the lengths) $0<\ell_1\leq \ell_2 \leq \ldots $, satisfying
\[
\sum_{\ell_j\leq L}\ell_j \sim \frac 12 X, \qquad 
\sum_{\ell_j\leq L}\ell_j^2 \sim \frac 12 X\log X
\]
where
\[
L:=\log X,
\]
motivated by the corresponding   asymptotics for unoriented primitive geodesics on a hyperbolic surface. 

To each geodesic we attach its two orientations $\gamma^{\pm 1}$ and throw them randomly into a finite abelian group $\mathcal H$ of  $H$ ``boxes", corresponding to the homology classes mod $q$, when $H=q^{2g}$, or to the quotient of size $q$ given by $H_1(M,\Z)/\Lambda$, where $\Lambda$ is a sublattice of index $q$ as in the work of Noam Pirani \cite{Pirani homology}, where $H=q$. 
For simplicity, we assume that $H$ is odd, corresponding to picking $q$ odd in the above scenarios. 
For each $j$, choose $[\gamma_j]$  independently and uniformly from $\mathcal H$, and set
$$[\gamma_j^{-1}]=-[\gamma_j].$$

We define the  counting function  
\[
\Theta_X(\alpha) = \sum_{\ell_j\leq L} \ell_j\Big(\mathbf 1_j(\alpha)+\mathbf 1_j(-\alpha)\Big). 
\]
Note that  
\[
\Theta_X(\alpha) =\Theta_X(-\alpha) ,
\]
and  that the mean value over all $\alpha$ is deterministic:
\[
\frac 1H \sum_{\alpha\in \mathcal H} \Theta_X(\alpha) = \frac 1H\sum_{\ell_j\leq L} 2\ell_j=:\overline{\Theta}.
\]

This models the weighted counting function of oriented closed geodesics in a homology class. It is a weighted version of the classical occupancy problem, or the random allocation model, see e.g. \cite[II.11, problems 5-19 and IV.2]{Feller}.

We now set
\[
G(X):=\sum_{\alpha\in \mathcal H} \left( \Theta_X(\alpha) - \overline{\Theta} \right)^2.
\]
This is our random model of the unnormalized variance. 
\begin{prop}
    The mean value of $G(X)$ is
    \[
    \E(G(X)) = \left( 1-\frac 1H \right) \, 2\sum_{\ell_j\leq L} \ell_j^2
    \]
    and hence as $X\to \infty$,
    \[
    \E(G(X))\sim \left( 1-\frac 1H \right)X\log X.
    \]
\end{prop}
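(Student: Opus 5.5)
We compute $\E(G(X))$ exactly by expanding the square, using the fact that the mean $\overline{\Theta}$ is deterministic.

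\textbf{Step 1: reduce to $\sum_\alpha\Theta_X(\alpha)^2$.} Since $\sum_\alpha \Theta_X(\alpha) = H\overline{\Theta}$ holds identically, we have
\[
G(X) = \sum_{\alpha\in\mathcal H}\Theta_X(\alpha)^2 - H\overline{\Theta}^2 .
\]
It therefore suffices to compute $\E\sum_\alpha \Theta_X(\alpha)^2$.

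\textbf{Step 2: expand the square.} Write $\Theta_X(\alpha)=\sum_j \ell_j Y_j(\alpha)$, where $Y_j(\alpha)=\mathbf 1_j(\alpha)+\mathbf 1_j(-\alpha)$ and $\mathbf 1_j(\alpha)$ is the indicator of $[\gamma_j]=\alpha$. Then
\[
\sum_\alpha\Theta_X(\alpha)^2=\sum_{j,k}\ell_j\ell_k\sum_\alpha Y_j(\alpha)Y_k(\alpha).
\]

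\emph{Diagonal terms $j=k$.} We have $\sum_\alpha Y_j(\alpha)^2 = \sum_\alpha (\mathbf 1_j(\alpha)^2+\mathbf 1_j(-\alpha)^2 + 2\,\mathbf 1_j(\alpha)\mathbf 1_j(-\alpha))$. The first two sums each equal $1$. The cross term requires $[\gamma_j]=\alpha=-\alpha$, that is $2\alpha=0$. Because $H$ is odd, this forces $\alpha=0$, so the cross term contributes $2\cdot\mathbf 1([\gamma_j]=0)$, whose expectation is $2/H$. Hence the expected diagonal term is $(2+2/H)\ell_j^2$.

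\emph{Off-diagonal terms $j\neq k$.} Here $[\gamma_j]$ and $[\gamma_k]$ are independent and uniform, and $-[\gamma_j]$ is also uniform. Therefore $\E\,\mathbf 1_j(\pm\alpha)\mathbf 1_k(\pm\alpha)=1/H^2$ for each of the four sign choices. Summing over $\alpha$ gives $\E\sum_\alpha Y_jY_k = 4/H$.

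\textbf{Step 3: assemble.} Combining the two cases,
\[
\E\sum_\alpha\Theta_X^2=\Big(2+\tfrac 2H\Big)\sum_j\ell_j^2+\tfrac 4H\Big(\big(\textstyle\sum_j\ell_j\big)^2-\sum_j\ell_j^2\Big)
=\Big(2-\tfrac 2H\Big)\sum_j\ell_j^2+\tfrac 4H\big(\textstyle\sum_j\ell_j\big)^2 .
\]
On the other hand, $H\overline{\Theta}^2=\frac 1H\big(2\sum_j\ell_j\big)^2=\frac 4H\big(\sum_j\ell_j\big)^2$. This cancels the last term exactly, leaving
\[
\E(G(X))=2\Big(1-\tfrac1H\Big)\sum_{\ell_j\le L}\ell_j^2 .
\]

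\textbf{Step 4: asymptotics.} The hypothesis $\sum_{\ell_j\le L}\ell_j^2\sim\frac12X\log X$ now gives $\E(G(X))\sim(1-\frac1H)X\log X$.

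The only delicate point is the self-pairing cross term $\mathbf 1_j(\alpha)\mathbf 1_j(-\alpha)$. This is exactly where oddness of $H$ is used; it is the model's analogue of the anti-diagonal contribution, which produced the extra factor for $q=2$ in Theorem~\ref{thm:G}. Everything else is exact bookkeeping, and the cancellation against $H\overline{\Theta}^2$ should be checked carefully.
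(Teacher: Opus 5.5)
Your proof is correct, but it organizes the computation differently from the paper.

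\textbf{The paper's route.} It uses $\E\Theta_X(\alpha)=\overline{\Theta}$ for every $\alpha$ to write $\E(G(X))=\sum_\alpha\Var\Theta_X(\alpha)$. It then adds the variances of the independent single-ball contributions $Y_{j,\alpha}$, splitting the boxes into two cases: $\alpha\neq 0$, where $Y_{j,\alpha}=\ell_j$ with probability $2/H$, and $\alpha=0$, where $Y_{j,0}=2\ell_j$ with probability $1/H$. Terms coupling two different balls never appear, because they are covariances of independent variables.

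\textbf{Your route.} You compute the raw second moment by pairs of balls $(j,k)$, with the sum over boxes taken inside. The terms $j\neq k$ then appear explicitly as $\frac 4H\bigl((\sum_j\ell_j)^2-\sum_j\ell_j^2\bigr)$ and must cancel against $H\overline{\Theta}^2$, which your arithmetic confirms.

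\textbf{What each buys.} The paper's route is a little shorter, with no cancellation to check. Yours mirrors the geometric argument (Lemma~\ref{lem:simple reduction} followed by the diagonal/anti-diagonal/off-diagonal split). It also confines the use of oddness to the single self-pairing term $2\,\Prob(2[\gamma_j]=0)$, which makes it immediate to drop the oddness hypothesis. For an arbitrary finite abelian group $\mathcal H$ with $2$-torsion subgroup $\mathcal H[2]$, the same computation gives $\E(G(X))=\bigl(2-\frac 4H+\frac{2|\mathcal H[2]|}{H}\bigr)\sum_{\ell_j\le L}\ell_j^2$. For $\mathcal H=(\Z/2\Z)^{2g}$ this is exactly twice the odd case, which is the model's counterpart of the doubling at $q=2$ in Theorem~\ref{thm:G}.
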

Notice that at the level of the expected variance the model gives the same leading asymptotic $X\log X$  whenever $H=H(X)\to \infty$, independently of whether the boxes are sparsely or densely occupied. The distinction between the two regimes is instead reflected in the typical occupancy of an individual box. 

\begin{proof}

We have 
\[
\E(G) = \sum_\alpha \E\left(\left(\Theta_X(\alpha)-\overline{\Theta}\right)^2\right) = \sum_\alpha \Var(\Theta_X(\alpha)) 
\]
 so that we want to compute the variance of $\Theta_X(\alpha)$.
 
 Let    
\[
\mathbf 1_{j}(\alpha) = \begin{cases} 1, &[\gamma_j]=\alpha,\\ 0,& {\rm else}. \end{cases}
\]
Then  
\[
\Prob(\mathbf 1_{j}(\alpha)=1) = \frac 1H
\]
and so
\[
\E( \mathbf 1_{j}(\alpha)) =  \frac 1H.
\]

We assume that $H$ is odd, so the only option for $\alpha=-\alpha$ is $\alpha=0$. Hence the events $[\gamma_j]=\alpha$ and $[\gamma_j]=-\alpha$ are disjoint for $\alpha\neq 0$. 
Therefore we have for $\alpha\neq 0$, 
\[
\E(\Theta_X(\alpha)) = \frac 2H\sum_{\ell_j\leq L}\ell_j,
\]
while for $\alpha=0$, both orientations contribute to the same class, so that 
\[
\E(\Theta_X(0)) = \frac 2H\sum_{\ell_j\leq L}\ell_j .
\]

For $\alpha\neq 0$, write 
\[
Y_{j,\alpha} = \ell_j\Big(\mathbf 1_j(\alpha)+\mathbf 1_j(-\alpha)\Big).
\]
Then $Y_{j,\alpha}= \ell_j$ with probability $2/H$, and is zero otherwise, hence
\[
\Var(Y_{j,\alpha}) =\ell_j^2 \left( \frac 2H-\frac{4}{H^2}\right).
\] 
Since the different balls (the $j$'s) are independent, we find
\[
\Var \Theta_X(\alpha) = \left( \frac 2H-\frac{4}{H^2}\right)\sum_{\ell_j\leq L} \ell_j^2, \quad \alpha\neq 0.
\]

For $\alpha=0$, we have 
\[
Y_{j,0}=2\ell_j \mathbf 1_j(0)
\]
and so $Y_{j,0}= 2\ell_j$ with probability $1/H$, and is zero otherwise, and 
\[
\Var(Y_{j,0}) =  \left( \frac 1H-\frac{1}{H^2}\right)(2\ell_j)^2,
\]
hence
\[
\Var \Theta_X(0) = 4\left( \frac 1H-\frac{1}{H^2}\right)\sum_{\ell_j\leq L} \ell_j^2.
\]

Therefore we find 
\[
\begin{split} 
\E(G(X)) &=    \sum_\alpha\Var(  \Theta_X(\alpha))
\\
&= 
(H-1)(\frac 1H-\frac 2{H^2}) 2\sum_{\ell_j\leq L} \ell_j^2 + 4\left( \frac 1H-\frac{1}{H^2}\right)\sum_{\ell_j\leq L} \ell_j^2 
\\
&= (1-\frac  1H) \cdot 2\sum_{\ell_j\leq L} \ell_j^2 .
\end{split}
\]
 \end{proof}

 \bigskip

\noindent{\bf Acknowledgements.}   We thank Noam Pirani for helpful discussions. This research was supported by  the ISF-NSFC joint research program (Grant No. 3109/23).  

\bigskip

\noindent{\bf Acknowledgement of AI use.} The author used ChatGPT (OpenAI) as a conversational tool during the preparation of this manuscript. It was used to discuss the interpretation and presentation of the results, in particular  the random allocation model, as well as to assist with exposition and proofreading. All mathematical claims and arguments were independently verified by the author, who takes full responsibility for the contents of the paper.

\end{document}